\date{\empty}
\numberwithin{equation}{section} \theoremstyle{plain}
\newtheorem*{thm*}{Main Theorem}
\newtheorem{theorem}{Theorem}[section]
\newtheorem{corollary}[theorem]{Corollary}
\newtheorem*{corollary*}{Corollary}
\newtheorem*{claim*}{Claim}
\newtheorem{lemma}[theorem]{Lemma}
\newtheorem*{lemma*}{Lemma}
\newtheorem{proposition}[theorem]{Proposition}
\newtheorem*{proposition*}{Proposition}
\newtheorem{remark}[theorem]{Remark}
\newtheorem*{remark*}{Remark}
\newtheorem*{example*}{Example}
\newtheorem*{question*}{Question}
\newtheorem*{definition*}{Definition}
\begin{document}

\begin{center}
{\large  \bf Drazin Invertibility of Product and Difference of Idempotents in a Ring}\\
\vspace{0.8cm} {\small {\bf  Jianlong Chen\footnote{Corresponding author. Email: jlchen@seu.edu.cn}, \ \ \ Huihui Zhu} \\
Department of Mathematics, Southeast University, Nanjing 210096, China.}
\end{center}

\bigskip
{\bf  Abstract:}  \leftskip0truemm\rightskip0truemm In this paper, several equivalent conditions on the Drazin invertibility of product and difference of idempotents are obtained in a ring. Some results in Banach algebra are extended to the ring case.
\\{ \textbf{Keywords:}} Drazin inverse, idempotent, commutator, anti-commutator
\\{ \textbf{2010 Mathematics Subject Classification:}} 15A09, 16U99.
 \bigskip


\section { \bf Introduction}

~~~~Throughout this paper, $R$ denotes an associative ring with unity $1$. Recall that an element $a\in R$ is said to be Drazin invertible if there exists $b\in R$ such that
\begin{center}
$ab=ba$, $bab=b$, $a^k=a^{k+1}b$
\end{center}
for some positive integer $k$. The element $b$ above is unique if it exists and denoted by $a^D$. The such least $k$ is called the Drazin index of $a$, denoted by ${\rm ind}(a)$. If ${\rm ind}(a)=1$, then $b$ is called group inverse of $a$ and denoted by $a^ \#$. By $R^D$ we mean the set of all Drazin invertible elements of $R$.

Drazin inverses are closely related to some regularity of rings. It is well known that $a$ is Drazin invertible if and only if $a$ is strongly $\pi$-regular (i.e., $a^n\in a^{n+1}R \cap Ra^{n+1}$ for some nonnegative integer $n$) if and only if $a^m$ is group invertible for some positive integer $m$. By [9], we know that if $a$ is Drazin invertible, then $a^m$ is Drazin invertible for any positive integer $m$. Hence, $a$ is Drazin invertible if and only if $a^2$ is Drazin invertible. In [11, 12], Koliha and Rako\v{c}evi\'{c} studied the invertibility of the difference and the sum of idempotents in a ring and proved that $p-q$ is invertible if and only if $1-pq$ and $p+q$ are invertible for any idempotents $p$ and $q$. Since then, this topic attracted broad attention. Many authors extended the ordinary invertibility to the (generalized) Drazin invertibility. For example, Deng and Wei [8] proved that if $p$, $q$ are idempotents in $\mathscr{A}=\mathcal{B}(X)$, the ring of all bounded linear operators in a complex Banach space $X$. Then, $(1)$ $p-q\in \mathscr{A}^D$ if and only if $p+q\in \mathscr{A}^D$ if and only if $1-pq\in \mathscr{A}^D$; $(2)$ $pq-qp\in \mathscr{A}^D$ if and only if $pq+qp \in \mathscr{A}^D$ if and only if $pq\in \mathscr{A}^D$ and $p-q\in \mathscr{A}^D$. These results are generalized to Banach algebra in [10]. More results on the Drazin invertibility of sum, difference and product of idempotents can be found in [3-8, 10, 13].

In this paper, we consider the Drazin invertibility of $p-q$, $pq$, $pq-qp$ (commutator) and $pq+qp$ (anti-commutator), where $p$ and $q$ are idempotents in a ring. Some results in [3, 4] for Banach algebra are extended to the ring case.
\section {\bf Key lemmas}
~~~~In this section, we begin with some elementary and known results which will be useful in section 3.
\begin{lemma}$([9])$
$(1)$  Let $a\in R^D$. If $ab=ba$, then $a^Db=ba^D$,\\
 $(2)$ Let $a$, $b\in R^D$ and $ab=ba=0$. Then $(a+b)^D=a^D+b^D$.
\end{lemma}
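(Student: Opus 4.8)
\noindent{\bf Proof proposal.} For part (1) --- the double commutant property of the Drazin inverse --- the plan is to produce $a^{D}b=ba^{D}$ formally from the three defining relations $aa^{D}=a^{D}a$, $a^{D}aa^{D}=a^{D}$ and $a^{k}=a^{k+1}a^{D}$ (with $k=\mathrm{ind}(a)$), together with $ab=ba$. First I would record the immediate consequences: $a^{D}$ commutes with every power of $a$; $e:=a^{D}a=aa^{D}$ is idempotent (because $a^{D}aa^{D}=a^{D}$); $a^{D}=a(a^{D})^{2}=(a^{D})^{2}a$; and, since $(a^{D}a)^{k}=a^{D}a$,
\[
a^{D}=(a^{D})^{k+1}a^{k}=a^{k}(a^{D})^{k+1}.
\]
The point is then to inject the relation $a^{k}=a^{k+1}a^{D}$ so as to collapse a block $(a^{D})^{k+1}a^{k+1}$ (or $a^{k+1}(a^{D})^{k+1}$) down to $e$. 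Starting from $a^{D}b=(a^{D})^{k+1}a^{k}b$, then using $a^{k}b=ba^{k}$, then $a^{k}=a^{k+1}a^{D}$, then $ba^{k+1}=a^{k+1}b$, and finally $(a^{D})^{k+1}a^{k+1}=e$, I get $a^{D}b=eba^{D}=aa^{D}ba^{D}$; the mirror computation (using $a^{D}=a^{k}(a^{D})^{k+1}$ and $a^{k}=a^{D}a^{k+1}$) gives $ba^{D}=a^{D}baa^{D}$. Substituting the second identity into the first and simplifying with $aa^{D}a^{D}=a^{D}$ yields $a^{D}b=aa^{D}(ba^{D})=aa^{D}a^{D}\,baa^{D}=a^{D}baa^{D}=ba^{D}$.

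For part (2) the plan is to check directly that $c:=a^{D}+b^{D}$ fulfils the three Drazin conditions for $a+b$. The preliminary step is to observe that $ab=ba=0$ forces all six mixed products to vanish,
\[
a^{D}b=ba^{D}=ab^{D}=b^{D}a=a^{D}b^{D}=b^{D}a^{D}=0,
\]
which follows from $a^{D}=a(a^{D})^{2}=(a^{D})^{2}a$ and the analogue for $b$; for instance $a^{D}b=(a^{D})^{2}ab=0$, while $a^{D}b^{D}=(a^{D})^{2}ab^{D}$ and $ab^{D}=ab(b^{D})^{2}=0$. Granting these, every cross term disappears and the verification is immediate: $(a+b)c=aa^{D}+bb^{D}=c(a+b)$ (using $aa^{D}=a^{D}a$, $bb^{D}=b^{D}b$); $c(a+b)c=a^{D}aa^{D}+b^{D}bb^{D}=a^{D}+b^{D}=c$; and, with $m=\max\{\mathrm{ind}(a),\mathrm{ind}(b)\}$, since $(a+b)^{j}=a^{j}+b^{j}$ for $j\ge 1$ and $a^{j}=a^{j+1}a^{D}$ for $j\ge\mathrm{ind}(a)$, one gets $(a+b)^{m+1}c=a^{m+1}a^{D}+b^{m+1}b^{D}=a^{m}+b^{m}=(a+b)^{m}$. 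Hence $a+b\in R^{D}$, $(a+b)^{D}=a^{D}+b^{D}$, and $\mathrm{ind}(a+b)\le m$.

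I expect the only real obstacle to be in part (1): in a general ring $a^{D}$ need not be a polynomial in $a$, so its membership in the double commutant of $a$ is not automatic, and naive symbol manipulation is circular --- every natural attempt reduces to ``$b$ commutes with $(a^{D})^{j}$'', which is exactly what is to be proved. The content is the bookkeeping that breaks this loop: extracting the two twisted identities $a^{D}b=aa^{D}ba^{D}$ and $ba^{D}=a^{D}baa^{D}$ and then combining them. Part (2), by contrast, is a routine computation once the orthogonality relations above are established, and it additionally shows $\mathrm{ind}(a+b)\le\max\{\mathrm{ind}(a),\mathrm{ind}(b)\}$.
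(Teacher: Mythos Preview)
Your proof is correct. Note, however, that the paper does not supply its own proof of this lemma: it is stated with a citation to Drazin's original 1958 paper [9] and used as a black box, so there is no argument in the paper to compare yours against. For part~(1) your two ``twisted'' identities $a^{D}b=aa^{D}ba^{D}$ and $ba^{D}=a^{D}baa^{D}$, obtained by writing $a^{D}=(a^{D})^{k+1}a^{k}=a^{k}(a^{D})^{k+1}$ and shuttling the powers of $a$ past $b$, are exactly the mechanism Drazin uses; the substitution step closing the loop is also standard and your simplification $aa^{D}a^{D}=a^{D}aa^{D}=a^{D}$ is valid. For part~(2) your vanishing of all six mixed products via $a^{D}=(a^{D})^{2}a=a(a^{D})^{2}$ is the expected reduction, and the verification of the three Drazin axioms with index bound $\mathrm{ind}(a+b)\le\max\{\mathrm{ind}(a),\mathrm{ind}(b)\}$ is clean.
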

\begin{lemma} $([14])$ Let $a, b\in R^D$ and $ab=ba$. Then $ab\in R^D$ and $(ab)^D=b^Da^D=a^Db^D$.
\end{lemma}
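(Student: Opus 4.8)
The plan is to write down the candidate inverse explicitly and verify the three defining relations. Set $c:=a^Db^D$. Before anything else I would record the commutativity relations I will use repeatedly. From $ab=ba$ and $a\in R^D$, Lemma 2.1(1) gives $a^Db=ba^D$; applying Lemma 2.1(1) once more, this time with the Drazin-invertible element being $b$ and the commuting element being $a^D$, yields $b^Da^D=a^Db^D$, so in particular $c=a^Db^D=b^Da^D$. Likewise, from $ab=ba$ and $b\in R^D$ we get $b^Da=ab^D$. Together with $aa^D=a^Da$ and $bb^D=b^Db$ this shows that the four elements $a,a^D,b,b^D$ pairwise commute, and hence $c$ commutes with both $a$ and $b$.

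Next I would check the relation ${\rm (ab)}c=c{\rm (ab)}$. Using pairwise commutativity, $(ab)(a^Db^D)=(aa^D)(bb^D)=(a^Da)(b^Db)=(a^Db^D)(ab)$, so $c$ commutes with $ab$. Then I would verify $c(ab)c=c$: again by commutativity and the identity $x^Dxx^D=x^D$,
\[
c(ab)c=a^Db^Daba^Db^D=(a^Daa^D)(b^Dbb^D)=a^Db^D=c.
\]

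Finally, for the power condition I would take $k:=\max\{{\rm ind}(a),{\rm ind}(b)\}$, so that $a^k=a^{k+1}a^D$ and $b^k=b^{k+1}b^D$. Then
\[
(ab)^{k+1}c=a^{k+1}b^{k+1}a^Db^D=(a^{k+1}a^D)(b^{k+1}b^D)=a^kb^k=(ab)^k .
\]
Thus $ab\in R^D$ with $(ab)^D=c=a^Db^D=b^Da^D$. I do not expect a genuine obstacle here; the whole argument is bookkeeping once the commutativity relations are in hand. The one step that needs slight care is the repeated use of Lemma 2.1(1) to obtain all of these relations — especially $a^Db^D=b^Da^D$ — after which every computation is a direct rearrangement of commuting factors.
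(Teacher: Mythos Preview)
Your verification is correct: once Lemma~2.1(1) gives the pairwise commutativity of $a$, $a^D$, $b$, $b^D$, the three Drazin axioms for $c=a^Db^D$ follow by straightforward rearrangement, and taking $k=\max\{{\rm ind}(a),{\rm ind}(b)\}$ handles the power condition since $a^{k+1}a^D=a^k$ and $b^{k+1}b^D=b^k$ hold for any $k$ at least as large as the respective indices.

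Note, however, that the paper does not actually supply a proof of this lemma; it is quoted from reference~[14] and used as a known tool. So there is no ``paper's own proof'' to compare against. Your argument is the standard direct verification one would expect for this result, and it is exactly the kind of proof one finds in the source literature.
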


\begin{lemma} $([2])$ Let $a, b\in R$ and $ab\in R^D$. Then $ba\in R^D$ and $(ba)^D=b((ab)^D)^2a$.
\end{lemma}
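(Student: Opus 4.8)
The plan is to verify directly that
\[
d:=b\bigl((ab)^{D}\bigr)^{2}a
\]
is the Drazin inverse of $ba$. Write $c:=(ab)^{D}$ and fix an integer $k\ge 1$ with $(ab)^{k}=(ab)^{k+1}c$. The only facts I will use are the three defining relations $(ab)c=c(ab)$, $c(ab)c=c$, $(ab)^{k}=(ab)^{k+1}c$, together with the elementary ``shift'' identity $b(ab)^{n}a=(ba)^{n+1}$ for all $n\ge 0$, which follows by a one-line induction on $n$. It is convenient to record in advance that $e:=(ab)c=c(ab)$ is an idempotent with $ce=ec=c$; both equalities follow at once from $c(ab)c=c$ and the commutation $(ab)c=c(ab)$ (namely $ce=c(ab)c=c$ and $ec=(ab)c^{2}=c(ab)c=c$).

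First I would check $(ba)d=d(ba)$: expanding, $(ba)d=b(ab)c^{2}a$ and $d(ba)=bc^{2}(ab)a$, and these agree because $ab$ commutes with $c$. Next, $d(ba)d=bc^{2}(ab)^{2}c^{2}a$; since $(ab)c^{2}=c(ab)c=c$ one gets $(ab)^{2}c^{2}=(ab)\cdot(ab)c^{2}=(ab)c=e$, hence $c^{2}(ab)^{2}c^{2}=c^{2}e=c(ce)=c^{2}$, so that $d(ba)d=bc^{2}a=d$. Both of these computations are routine manipulations with the defining relations.

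The one genuinely load-bearing step is the power identity. From $(ab)^{k}=(ab)^{k+1}c$ we get $(ab)^{k+1}c=(ab)^{k}$, whence $(ab)^{k+2}c=(ab)(ab)^{k+1}c=(ab)(ab)^{k}=(ab)^{k+1}$ and therefore $(ab)^{k+2}c^{2}=(ab)^{k+1}c=(ab)^{k}$. Using the shift identity,
\[
(ba)^{k+2}d=b(ab)^{k+1}a\cdot bc^{2}a=b(ab)^{k+2}c^{2}a=b(ab)^{k}a=(ba)^{k+1},
\]
which is precisely the third Drazin axiom for $ba$ (and in fact shows ${\rm ind}(ba)\le{\rm ind}(ab)+1$). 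Combined with the checks in the previous paragraph, this gives $ba\in R^{D}$ with $(ba)^{D}=d=b\bigl((ab)^{D}\bigr)^{2}a$.

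I do not anticipate any real obstacle here: the whole argument is bookkeeping with the defining relations of $(ab)^{D}$ and the shift identity, and the only thing that requires attention is tracking which factors the idempotent $e=(ab)c$ absorbs (the identities $ce=ec=c$ and $(ab)^{k+2}c^{2}=(ab)^{k}$). One could alternatively route the proof through the core-nilpotent decomposition of $ab$ determined by $(ab)^{D}$, but the direct verification above is shorter and imposes no extra hypotheses on $R$.
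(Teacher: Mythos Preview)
Your verification is correct: the three Drazin axioms for $d=b((ab)^D)^2a$ all check out, and the bookkeeping with $e=(ab)c$ and the shift identity $b(ab)^n a=(ba)^{n+1}$ is accurate. Note that the paper itself supplies no proof of this lemma---it merely cites Cline's technical report---so there is nothing to compare against; what you have written is essentially the standard direct proof of Cline's formula.
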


\begin{lemma}$([1])$ Let $a, b\in R$. Then $1-ab\in R^D$ if and only if $1-ba\in R^D$.
\end{lemma}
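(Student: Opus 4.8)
The plan is to prove only the implication ``$1-ab\in R^{D}\Rightarrow 1-ba\in R^{D}$''; the reverse implication is then obtained by interchanging the roles of $a$ and $b$, which interchanges $1-ab$ and $1-ba$. Write $x=1-ab$, $y=1-ba$, $d=x^{D}$, $k={\rm ind}(x)$ and $x^{\pi}=1-xd$. The first step is to record the two intertwining identities $xa=ay$ and $bx=yb$, both of which are immediate from $a(ba)=(ab)a$; iterating them gives $x^{n}a=ay^{n}$ and $bx^{n}=y^{n}b$ for every $n\ge 0$. I will also use, with no extra work, the standard facts $xd=dx$, $dx^{\pi}=x^{\pi}d=0$, $x^{k}x^{\pi}=0$ and $x^{k+1}d=x^{k}$.

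The core of the argument is to write down an explicit Drazin inverse of $y$. The guiding model is Jacobson's identity $(1-ba)^{-1}=1+b(1-ab)^{-1}a$; but since $x=1-ab$ is only Drazin invertible, the naive guess $1+bda$ fails, because it ignores the part of $R$ on which $x$ acts nilpotently and on which $1-ab$ is nevertheless invertible, with Neumann inverse $\sum_{i=0}^{k-1}x^{i}$. This leads to the candidate
\[
z\;:=\;1+b\Big(d-\Big(\sum_{i=0}^{k-1}x^{i}\Big)x^{\pi}\Big)a ,
\]
and I claim $z=y^{D}$. I propose to verify the three defining relations directly; abbreviate $c:=d-\big(\sum_{i=0}^{k-1}x^{i}\big)x^{\pi}$ and $S:=\big(\sum_{i=0}^{k-1}x^{i}\big)x^{\pi}$, so $z=1+bca$. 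Since $d$, $x^{\pi}$ and the powers $x^{i}$ all commute with $x$, one gets $yz=y+bxca=zy$ at once from $yb=bx$ and $ay=xa$. The telescoping relation $xS=S-x^{\pi}$ together with $xd=1-x^{\pi}$ gives $xc=1-S$, hence $yz=1-bSa$; then, replacing the inner $ab$ by $1-x$ and using $c(1-x)=d-1$ and $dS=0$, one checks $zyz=z$. Finally $y^{k}-y^{k+1}=y^{k}(1-y)=(y^{k}b)a=bx^{k}a$, while $x^{k+1}c=x^{k}$ (from $x^{k+1}d=x^{k}$ and $x^{k+j}x^{\pi}=0$ for $j\ge 0$) gives $y^{k+1}z=y^{k+1}+bx^{k}a$, so $y^{k}=y^{k+1}z$. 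Hence $z=y^{D}$, and as a by-product ${\rm ind}(1-ba)\le{\rm ind}(1-ab)$ (so by symmetry the two Drazin indices in fact coincide).

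The only genuinely non-routine point is finding the correct candidate $z$ — in particular, recognizing that the Jacobson formula must be corrected by the ``nilpotent-part Neumann inverse'' term $-\big(\sum_{i=0}^{k-1}x^{i}\big)x^{\pi}$. Once that term is in place, every remaining step is a short manipulation with the commuting elements $x,d,x^{\pi}$ and the two intertwining identities $xa=ay$, $bx=yb$; nothing deeper (not even Lemmas 2 and 3) is required.
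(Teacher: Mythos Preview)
Your argument is correct: the candidate $z=1+b\big(d-\sum_{i=0}^{k-1}x^{i}x^{\pi}\big)a$ is indeed the Drazin inverse of $y=1-ba$, and each of the three verifications goes through as you indicate (the key telescoping $xS=S-x^{\pi}$ and the identity $c(1-x)=d-1$ are both valid, and $x^{k}x^{\pi}=0$ kills the remaining terms).

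Note, however, that the paper does not prove this lemma at all; it is simply quoted from reference~[1] (Castro-Gonz\'alez, Mendes-Ara\'ujo and Patricio). So there is no ``paper's own proof'' to compare against. What you have supplied is a fully self-contained, constructive proof via a Jacobson-type formula corrected by the nilpotent-part Neumann sum, which in addition yields the explicit expression for $(1-ba)^{D}$ and the equality of Drazin indices ${\rm ind}(1-ab)={\rm ind}(1-ba)$. This is strictly more than the present paper needs (it only uses the existence statement), but it makes the lemma independent of the external reference.
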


\begin{lemma}
Let $a, b\in R^D$ and $p^2=p\in R$. If $ap=pa$ and $bp=pb$, then $ap+b(1-p)\in R^D$ and
\begin{center}
$(ap+b(1-p))^D=a^D p+b^D(1-p)$.
\end{center}
\end{lemma}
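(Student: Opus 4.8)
The plan is to split $ap+b(1-p)$ into two mutually annihilating Drazin invertible summands and then apply Lemma 1(2). First I would record that every idempotent $e\in R$ is its own Drazin inverse (take $b=e$ in the definition), so in particular $p^D=p$ and $(1-p)^D=1-p$. Since $a\in R^D$ and $ap=pa$, Lemma 1(1) yields $a^Dp=pa^D$; then Lemma 2, applied to the commuting pair $a,p\in R^D$, gives $ap\in R^D$ with $(ap)^D=p^Da^D=a^Dp$. Replacing $a$ by $b$ and $p$ by $1-p$ — noting $b(1-p)=(1-p)b$ since $bp=pb$ — the same argument gives $b(1-p)\in R^D$ with $(b(1-p))^D=b^D(1-p)$.

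Next I would verify that the two summands $x:=ap$ and $y:=b(1-p)$ satisfy $xy=yx=0$. Indeed $xy=a(pb)(1-p)=a(bp)(1-p)=ab\,p(1-p)=0$ using $pb=bp$, and symmetrically $yx=b\bigl((1-p)a\bigr)p=b\bigl(a(1-p)\bigr)p=ba\,(1-p)p=0$ using $(1-p)a=a(1-p)$, which follows from $ap=pa$.

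Finally, since $x,y\in R^D$ with $xy=yx=0$, Lemma 1(2) gives $x+y\in R^D$ and $(x+y)^D=x^D+y^D=a^Dp+b^D(1-p)$, which is precisely the assertion.

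I do not expect a genuine obstacle here; the only point requiring care is that all hypotheses of the cited lemmas hold verbatim — in particular that commutation with $p$ lifts to commutation of the Drazin inverses with $p$, and that $ap$ and $b(1-p)$ form a commuting pair of Drazin invertible elements before one multiplies them via Lemma 2. If a self-contained argument avoiding Lemma 2 is preferred, an alternative is to set $d:=a^Dp+b^D(1-p)$ and check the three defining identities directly, using the easy induction $c^n=a^np+b^n(1-p)$ for $c:=ap+b(1-p)$ (the cross terms vanish for the same reason as above) together with the index $k:=\max\{\mathrm{ind}(a),\mathrm{ind}(b)\}$; I would keep this in reserve.
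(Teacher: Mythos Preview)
Your argument is correct and coincides with the paper's own proof: the paper likewise observes $p^D=p$, obtains $(ap)^D=a^Dp$ and $(b(1-p))^D=b^D(1-p)$ from Lemma~2.2, notes $ap\cdot b(1-p)=b(1-p)\cdot ap=0$, and concludes via Lemma~2.1(2). Your write-up is simply a bit more explicit about the intermediate commutation checks.
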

\begin{proof} Since $p^2=p$, we have $p^D=p$. Thus, $a,$ $b,$ $p\in R^D$. Note that $ap=pa$ and $bp=pb$. We obtain $(ap)^D=a^Dp$ and $(b(1-p))^D=b^D(1-p)$ by Lemma 2.2. As $apb(1-p)=b(1-p)ap=0$, according to Lemma 2.1(2), it follows that $(ap+b(1-p))^D=a^D p+b^D(1-p)$.
\end{proof}

\begin{lemma}  Let $a \in R$, $p^2=p\in R$, $b=pa(1-p)$ and $c=(1-p)ap$. The following statements are equivalent:\\
 $(1)$ $b+ c \in R^D$,\\
 $(2)$ $bc\in R^D$,\\
 $(3)$ $b-c\in R^D$.
\end{lemma}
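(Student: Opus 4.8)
The plan is to show that each of $(1)$, $(2)$, $(3)$ is equivalent to the single condition $bc+cb\in R^D$, exploiting the ``off-diagonal'' nature of $b$ and $c$ relative to the idempotent $p$.

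I would first record the Peirce-type identities forced by $p^2=p$: one has $pb=b$, $bp=0$, $pc=0$, $cp=c$, and hence $b^2=c^2=0$ (because $(1-p)p=p(1-p)=0$). It follows that
\[
(b+c)^2=bc+cb \qquad\text{and}\qquad (b-c)^2=-(bc+cb).
\]
Using the standard fact recalled in the introduction that an element of $R$ is Drazin invertible if and only if its square is, together with $y\in R^D\Leftrightarrow -y\in R^D$, we get
\[
b+c\in R^D \iff bc+cb\in R^D \iff b-c\in R^D,
\]
which already settles $(1)\Leftrightarrow(3)$.

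For $(2)\Rightarrow bc+cb\in R^D$: assuming $bc\in R^D$, Lemma 2.3 yields $cb\in R^D$, and a direct computation using $p(1-p)=(1-p)p=0$ gives $(bc)(cb)=0=(cb)(bc)$; hence Lemma 2.1(2) shows $bc+cb\in R^D$. Conversely, for $bc+cb\in R^D\Rightarrow(2)$, I would put $x=bc+cb$ and check that $p(cb)=(cb)p=0$ while $p(bc)=(bc)p=bc$, so that $px=xp=bc$. Since $p$ is Drazin invertible with $p^D=p$ and commutes with $x\in R^D$, Lemma 2.2 gives $bc=px\in R^D$, which is $(2)$.

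The only step that is more than bookkeeping — and the crux of the argument — is noticing that the idempotent $p$ commutes with the anti-commutator $x=bc+cb$, with $px=bc$; this is exactly what allows $bc$ to be pulled out as a product of two commuting Drazin invertible elements. I expect no real obstacle beyond keeping the Peirce relations straight; alternatively one could derive $(2)\Rightarrow(1)$ from Lemma 2.5 applied to $bc$ and $cb$ (each of which commutes with $p$), but the route above is shorter.
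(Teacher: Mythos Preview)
Your argument is correct. The overall architecture coincides with the paper's: both proofs pass through the anti-commutator, using $(b\pm c)^2=\pm(bc+cb)$ together with the equivalence $x\in R^D\Leftrightarrow x^2\in R^D$, and both use Lemma~2.3 plus Lemma~2.1(2) for the implication $bc\in R^D\Rightarrow bc+cb\in R^D$.

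The one place where you diverge is the implication $bc+cb\in R^D\Rightarrow bc\in R^D$. The paper sets $x=(bc+cb)^D$, uses Lemma~2.1(1) to get $px=xp$, and then verifies by hand that $pxp$ satisfies the three Drazin-inverse axioms for $bc$ (checking $pxp=bc(pxp)^2$, $bc(pxp)=(pxp)bc$, and $(bc)^{n+1}pxp=(bc)^n$). Your route is shorter: you observe directly that $p(bc+cb)=(bc+cb)p=bc$, so $bc$ is a product of two commuting Drazin invertible elements and Lemma~2.2 finishes. The paper's approach yields the explicit formula $(bc)^D=p\,(bc+cb)^D\,p$, whereas yours avoids any computation with the Drazin-inverse axioms; neither advantage is decisive, but your version is tidier.
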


\begin{proof}
$(1)\Rightarrow (2)$ Since $(b+ c)^2=(bc+cb)$ and $b+ c \in R^D$, we have $ bc+cb \in R^D$. Let $x=(bc+cb)^D$. As $p(bc+cb)=(bc+cb)p$, we obtain that $px=xp$ by Lemma 2.1(1). Next, we prove that $(bc)^D=pxp$.

Since $x=(bc+cb)x^2$, we get
\begin{center}
$pxp=p(bc+cb)x^2p=bcx^2p=bc(px^2p)=bc(pxp)^2$.
\end{center}
 By $(bc+cb)x=x(bc+cb)$, we obtain $p(bc+cb)xp=px(bc+cb)p$. It follows that $bc(pxp)=(pxp)bc$.

 Because $(bc+cb)^{n+1}x=(bc+cb)^n$ for some $n$, we have
 $$((bc)^{n+1}+(cb)^{n+1})x=(bc)^n+(cb)^n.$$

Multiplying the equation above by $p$ on two sides yields
\begin{center}
$p(bc)^{n+1}xp=p(bc)^np$,
\end{center}
i.e., $(bc)^{n+1}pxp=(bc)^n$. So, $bc$ is Drazin invertible and $(bc)^D=pxp$.

$(2) \Rightarrow (1)$ According to Lemma 2.3, $bc\in R^D$ is equivalent to $cb\in R^D$. Note that $bc \cdot cb=cb\cdot bc=0$. We have $(b+ c)^2=(bc+cb)\in R^D$ by Lemma 2.1(2). It follows that $b+ c \in R^D$.

$(2)\Leftrightarrow (3)$ Its proof is similar to $(1)\Leftrightarrow(2)$.
\end{proof}

\begin{lemma} Let $a\in R$ with  $a-a^2\in R^D$ or $a+a^2\in R^D$. Then $a\in R^D$.
\end{lemma}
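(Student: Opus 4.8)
The plan is to reduce both hypotheses to a single implication and then split the problem using the idempotent attached to the Drazin inverse of $b:=a-a^{2}$. First, if $a+a^{2}\in R^{D}$, then $(-a)-(-a)^{2}=-(a+a^{2})\in R^{D}$ (the Drazin inverse of $-x$ is $-x^{D}$), so it suffices to prove the implication ``$a-a^{2}\in R^{D}\Rightarrow a\in R^{D}$'' and apply it to $-a$. Henceforth assume $b=a-a^{2}\in R^{D}$ and set $f=bb^{D}$, an idempotent. Since $a$ commutes with $b$, Lemma 2.1(1) gives that $a$ commutes with $b^{D}$, hence with $f$.

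I would then pass to the two corner rings $fRf$ and $(1-f)R(1-f)$, with identities $f$ and $1-f$, and write $a=af+a(1-f)$ with $af\in fRf$ and $a(1-f)\in(1-f)R(1-f)$; moreover $(af)\bigl(a(1-f)\bigr)=a^{2}f(1-f)=0$ and likewise $\bigl(a(1-f)\bigr)(af)=0$, because $f$ commutes with $a$. The idea is that $b$ becomes a unit on the first corner (forcing $a$ to be a unit there) and becomes nilpotent on the second (forcing $a-a^{2}$ to be nilpotent there).

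Concretely, in $fRf$ the core part $bf=b^{2}b^{D}$ is a unit, with inverse $b^{D}$ (a routine check from the Drazin identities, using $bb^{D}=b^{D}b$ and $b^{D}=fb^{D}f$); on the other hand $bf=(a-a^{2})f=af-(af)^{2}$ since $af=fa$ and $f^{2}=f$. An elementary fact then applies: if $u:=t-t^{2}$ is a unit of a unital ring, then $t$ is a unit, with right inverse $(1-t)u^{-1}$ and left inverse $u^{-1}(1-t)$; hence $af$ is a unit, in particular Drazin invertible, in $fRf$. In $(1-f)R(1-f)$ the element $b(1-f)$ is nilpotent, since $\bigl(b(1-f)\bigr)^{k}=b^{k}(1-f)=b^{k}-b^{k+1}b^{D}=0$ for $k={\rm ind}(b)$; but $b(1-f)=a(1-f)-\bigl(a(1-f)\bigr)^{2}$, and the binomial expansion of $t^{k}(1-t)^{k}=0$ shows that whenever $t-t^{2}$ is nilpotent one has $t^{k}\in t^{k+1}R\cap Rt^{k+1}$, i.e.\ $t$ is strongly $\pi$-regular and therefore Drazin invertible. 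Applying this inside $(1-f)R(1-f)$ yields $a(1-f)\in R^{D}$. (A Drazin inverse computed in a corner ring is automatically a Drazin inverse in $R$, since the defining equations involve only the ring operations.)

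Finally, since $af,\ a(1-f)\in R^{D}$ with $(af)\bigl(a(1-f)\bigr)=\bigl(a(1-f)\bigr)(af)=0$, Lemma 2.1(2) gives $a=af+a(1-f)\in R^{D}$, completing the proof. The step I expect to be the main obstacle is the choice of $f=bb^{D}$ together with the observation that $b$ is not merely Drazin invertible but genuinely invertible on $fRf$ — it is this upgrade that produces the traction needed to deduce invertibility of $a$ on that corner; once the core–nilpotent picture is in place, the two elementary facts about $t-t^{2}$ and the reassembly via Lemma 2.1(2) are straightforward.
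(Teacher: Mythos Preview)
Your proof is correct, but it takes a genuinely different route from the paper's. The paper argues directly in $R$ without any decomposition: writing $x=(a-a^{2})^{D}$, one has $ax=xa$ by Lemma~2.1(1), and the Drazin relation $(a-a^{2})^{n}=(a-a^{2})^{n+1}x$ reads $a^{n}(1-a)^{n}=a^{n+1}(1-a)^{n+1}x$. Expanding $(1-a)^{n}=1+\sum_{i=1}^{n}\binom{n}{i}(-a)^{i}$ and solving for $a^{n}$ gives
\[
a^{n}=a^{n+1}\Bigl[(1-a)^{n+1}x+\sum_{i=1}^{n}\binom{n}{i}(-a)^{i-1}\Bigr]=\Bigl[(1-a)^{n+1}x+\sum_{i=1}^{n}\binom{n}{i}(-a)^{i-1}\Bigr]a^{n+1},
\]
so $a^{n}\in a^{n+1}R\cap Ra^{n+1}$ and $a\in R^{D}$ immediately. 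Your approach instead passes through the core--nilpotent splitting $a=af+a(1-f)$ with $f=bb^{D}$: on the $f$-corner you upgrade Drazin invertibility of $b$ to genuine invertibility, forcing $af$ to be a unit; on the $(1-f)$-corner $b$ becomes nilpotent and you run exactly the paper's binomial computation in the special case $x=0$; then Lemma~2.1(2) reassembles. Your argument is more structural and makes the invertible/nilpotent dichotomy explicit, which is conceptually pleasant; the paper's argument is shorter because the single binomial identity with the extra $(1-a)^{n+1}x$ term handles both corners simultaneously, so the detour through corner rings is not needed.
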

\begin{proof} We only need to prove the situation when $a-a^2\in R^D$ with $x=(a-a^2)^D$.

By Lemma 2.1(1), it is clear $ax=xa$ since $a(a-a^2)=(a-a^2)a$.

Since $a-a^2\in R^D$, we get $(a-a^2)^n=(a-a^2)^{n+1}x$ for some integer $n\geq 1$, that is,
\begin{center}
$a^n(1-a)^n=a^{n+1}(1-a)^{n+1}x$.
\end{center}
Note that $$a^n(1-a)^n=a^n(1+\sum_{i=1}^nC_n^i(-a)^i).$$ It follows that
$$a^n=a^{n+1}[(1-a)^{n+1}x+\sum_{i=1}^nC_n^i(-a)^{i-1}]=[(1-a)^{n+1}x+\sum_{i=1}^nC_n^i(-a)^{i-1}]a^{n+1}.$$
This shows $a^n\in a^{n+1}R\cap Ra^{n+1}$. Hence, $a\in R^D$.
\end{proof}
\section{\bf Main results}
~~~~In what follows, $p$ and $q$ always mean two arbitrary idempotents in a ring $R$. We give some equivalent conditions for the Drazin invertibility of $p-q$, $pq$, $pq-qp$ and $pq+qp$.

\begin{proposition} The following statements are equivalent:\\
$(1)$ $1-pq\in R^D$,
$(2)$ $p-pq\in R^D$,
$(3)$ $p-qp\in R^D$,
$(4)$ $1-pqp\in R^D$,
$(5)$ $p-pqp\in R^D$,\\
 $(6)$ $1-qp\in R^D$,
$(7)$ $q-qp\in R^D$,
$(8)$ $q-pq\in R^D$,
$(9)$ $1-qpq\in R^D$,
$(10)$ $q-qpq\in R^D$.
\end{proposition}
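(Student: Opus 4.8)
The plan is to link all ten statements by one connected chain of elementary equivalences, using only Lemmas 2.1(2), 2.2, 2.3 and 2.4, together with the fact that an idempotent $e$ lies in $R^D$ with $e^D=e$. The later Lemmas 2.5--2.7 are not needed here.

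Step 1 (the unital conditions $(1),(4),(6),(9)$). These I would dispose of by iterating Lemma 2.4. Taking $a=pq$, $b=p$ gives $1-pqp\in R^D\Leftrightarrow 1-p(pq)=1-p^2q=1-pq\in R^D$, so $(1)\Leftrightarrow(4)$; taking $a=p$, $b=q$ gives $(1)\Leftrightarrow(6)$; and taking $a=qp$, $b=q$ gives $1-qpq\in R^D\Leftrightarrow 1-q(qp)=1-q^2p=1-qp\in R^D$, so $(6)\Leftrightarrow(9)$. Hence $(1),(4),(6),(9)$ are mutually equivalent.

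Step 2 (from corner-unital to ambient: $(4)\Leftrightarrow(5)$ and $(9)\Leftrightarrow(10)$). The key identities are $p-pqp=(1-pqp)p=p(1-pqp)$ and $1-pqp=(p-pqp)+(1-p)$, together with $(p-pqp)(1-p)=(1-p)(p-pqp)=0$. If $1-pqp\in R^D$, then $p-pqp$ is a product of two commuting Drazin invertible elements, hence Drazin invertible by Lemma 2.2; conversely, if $p-pqp\in R^D$, then $1-pqp$ is a sum of two commuting Drazin invertible elements whose product is $0$, hence Drazin invertible by Lemma 2.1(2). Replacing $(p,q)$ by $(q,p)$ throughout gives $(9)\Leftrightarrow(10)$ verbatim.

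Step 3 (the remaining conditions $(2),(3),(7),(8)$, via the swap $ab\leftrightarrow ba$ of Lemma 2.3). I would use the factorizations $p-pqp=\bigl(p(1-q)\bigr)p$ with $p\cdot p(1-q)=p-pq$ to get $(5)\Leftrightarrow(2)$; $p-pq=p(1-q)$ with $(1-q)p=p-qp$ to get $(2)\Leftrightarrow(3)$; and symmetrically $q-qpq=\bigl(q(1-p)\bigr)q$ with $q\cdot q(1-p)=q-qp$ to get $(10)\Leftrightarrow(7)$, and $q-qp=q(1-p)$ with $(1-p)q=q-pq$ to get $(7)\Leftrightarrow(8)$. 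Stringing the pieces together yields $(3)\Leftrightarrow(2)\Leftrightarrow(5)\Leftrightarrow(4)\Leftrightarrow(1)\Leftrightarrow(6)\Leftrightarrow(9)\Leftrightarrow(10)\Leftrightarrow(7)\Leftrightarrow(8)$, so all ten statements are equivalent. No individual step is a genuine obstacle; the only thing that needs care is choosing the factorizations so that Lemma 2.3/2.4 apply literally, and arranging the equivalences so that the resulting implication graph is connected.
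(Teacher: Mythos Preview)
Your proof is correct and follows essentially the same route as the paper: the same use of Lemma~2.4 for the ``unital'' equivalences, Lemma~2.2 for $(4)\Rightarrow(5)$, and Lemma~2.3 for $(2)\Leftrightarrow(3)\Leftrightarrow(5)$. The only difference is cosmetic: for $(5)\Rightarrow(4)$ the paper invokes Lemma~2.5 with $a=p-pqp$, $b=1$, whereas you appeal directly to Lemma~2.1(2) via $1-pqp=(p-pqp)+(1-p)$ with vanishing cross-products---but Lemma~2.5 is itself proved from Lemma~2.1(2), so this is the same argument unpacked; the paper also handles $(6)$--$(10)$ by the global $p\leftrightarrow q$ symmetry rather than repeating the chain explicitly as you do.
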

\begin{proof}  $(1)\Leftrightarrow(6)$ is obvious by Lemma 2.4. We only need to prove that $(1)-(5)$ are equivalent.

$(1)\Leftrightarrow(4)$ It is clear that $1-pq=1-p(pq)$. Thus, $1-pq$ is Drazin invertible if and only if $1-pqp$ is Drazin invertible by Lemma 2.4.

$(4)\Rightarrow(5)$ Since $p\in R^D$ and $p(1-pqp)=(1-pqp)p=p-pqp$, we obtain that $p-pqp$ is Drazin invertible according to Lemma 2.2.

$(5)\Rightarrow(4)$  Suppose $a=p-pqp$, $b=1$. Then $a$ and $b$ are Drazin invertible. Since $1-pqp=(p-pqp)+1-p$, it follows that $1-pqp=ap+b(1-p)$ is Drazin invertible in view of Lemma 2.5.

$(2)\Leftrightarrow(5)$ Since $p-pq=pp(1-q)$ and $p-pqp=p(1-q)p$, the result follows by Lemma 2.3.

$(2)\Leftrightarrow(3)$ It is evident according to Lemma 2.3.
\end{proof}

We replace $p$ and $q$ by $1-p$ and $1-q$ respectively in Proposition 3.1 to get the following result.

\begin{corollary}  The following statements are equivalent:\\
$(1)$ $p+q-pq\in R^D$, ~~~~~~~~~~~$(2)$ $q-pq\in R^D$, ~~~~~~~~~~~$(3)$ $q-qp\in R^D$,\\
$(4)$ $p+(1-p)(q-qp)\in R^D$, $(5)$ $(1-p)q(1-p)\in R^D$, ~$(6)$ $p+q-qp\in R^D$,\\
$(7)$ $p-qp\in R^D$, ~~~~~~~~~~~~~~~$(8)$ $p-pq\in R^D$,~~~~~~~~~~~~~$(9)$ $q+(1-q)(p-pq)\in R^D$,\\
 $(10)$ $(1-q)p(1-q)\in R^D$.
\end{corollary}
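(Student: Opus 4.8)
The plan is to apply Proposition 3.1 directly, but to the pair of idempotents $1-p$ and $1-q$ in place of $p$ and $q$. Since $(1-p)^2=1-p$ and $(1-q)^2=1-q$, Proposition 3.1 is applicable to this pair, so the ten conditions obtained from it under the substitution $p\mapsto 1-p$, $q\mapsto 1-q$ are mutually equivalent. It then suffices to simplify each of those ten conditions and check that one recovers exactly the ten statements of the Corollary.

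First I would dispose of the six ``linear'' conditions. Conditions (1) and (6) of Proposition 3.1 become $1-(1-p)(1-q)=p+q-pq\in R^D$ and $1-(1-q)(1-p)=p+q-qp\in R^D$, which are conditions (1) and (6) of the Corollary. For (2), using $(1-p)-(1-p)(1-q)=(1-p)q=q-pq$, one obtains condition (2) of the Corollary; in the same way conditions (3), (7), (8) of the Proposition become $q-qp\in R^D$, $p-qp\in R^D$, $p-pq\in R^D$, matching (3), (7), (8) of the Corollary. Each of these is a one-line simplification.

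The remaining work is the four ``cubic'' conditions (4), (5), (9), (10). Writing $e=1-p$, condition (5) of Proposition 3.1 reads $e-e(1-q)e\in R^D$, and factoring gives $e-e(1-q)e=eqe=(1-p)q(1-p)$, which is condition (5) of the Corollary; since $(1-p)q(1-p)=(1-p)(q-qp)$ and $1-e(1-q)e=p+(1-p)q(1-p)$, condition (4) of the Proposition becomes $p+(1-p)(q-qp)\in R^D$, which is condition (4) of the Corollary. Interchanging the roles of $p$ and $q$ (i.e.\ using $f=1-q$) turns conditions (9) and (10) of the Proposition into $q+(1-q)p(1-q)\in R^D$ and $(1-q)p(1-q)\in R^D$, namely conditions (9) and (10) of the Corollary. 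Thus, after these substitutions the ten statements of the Corollary are precisely the ten equivalent statements of Proposition 3.1 for the idempotents $1-p$ and $1-q$, so they are equivalent. The only step requiring care is the bookkeeping of the degree-three expressions in (4), (5), (9), (10); everything else is immediate.
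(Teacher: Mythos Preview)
Your proof is correct and follows exactly the paper's approach: the paper proves this corollary in one line by substituting $1-p$ for $p$ and $1-q$ for $q$ in Proposition~3.1, and you have simply written out the resulting simplifications explicitly.
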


\begin{theorem} The following statements are equivalent:\\
$(1)$ $p-q\in R^D$,\\
$(2)$ $1-pq\in R^D$,\\
$(3)$ $p+q-pq\in R^D$.
\end{theorem}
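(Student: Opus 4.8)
The plan is to route all three statements through $(2)$, i.e. through $1-pq\in R^D$, using the fact that Proposition 3.1 and Corollary 3.2 already provide many reformulations of the relevant expressions. First I would dispose of $(2)\Leftrightarrow(3)$ with no new computation at all: by Proposition 3.1, $1-pq\in R^D$ is equivalent to $q-pq\in R^D$, and by Corollary 3.2 the latter is equivalent to $p+q-pq\in R^D$. Thus the whole theorem reduces to proving $(1)\Leftrightarrow(2)$, and I would recall here the fact noted in the Introduction (from [9]) that $p-q\in R^D$ if and only if $(p-q)^2\in R^D$, so that $(1)$ may be replaced by $(p-q)^2\in R^D$.

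The bridge between $(1)$ and $(2)$ is the identity $(p-q)^2=p(1-q)+q(1-p)$, obtained by expanding $(p-q)^2=p-pq-qp+q$. For $(2)\Rightarrow(1)$ I would argue as follows: if $1-pq\in R^D$, then Proposition 3.1 gives both $p-pq=p(1-q)\in R^D$ and $q-qp=q(1-p)\in R^D$; since $p(1-q)\cdot q(1-p)=0=q(1-p)\cdot p(1-q)$, Lemma 2.1(2) yields $p(1-q)+q(1-p)=(p-q)^2\in R^D$, hence $p-q\in R^D$.

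For the converse $(1)\Rightarrow(2)$ I would exploit a commutation: starting from $p-q\in R^D$, hence $(p-q)^2\in R^D$, a short computation shows that the idempotent $p$ commutes with $(p-q)^2$, namely $p(p-q)^2=(p-q)^2p=p-pqp$. Since $p\in R^D$ (with $p^D=p$) and $(p-q)^2\in R^D$ commute, Lemma 2.2 gives $p(p-q)^2=p-pqp\in R^D$; and by Proposition 3.1 (statement $(5)\Leftrightarrow(1)$), $p-pqp\in R^D$ is equivalent to $1-pq\in R^D$, which is $(2)$.

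I expect the only genuinely non-routine points to be (i) spotting the factorization $(p-q)^2=p(1-q)+q(1-p)$ together with the vanishing of the cross products $p(1-q)\,q(1-p)$ and $q(1-p)\,p(1-q)$, which is exactly what makes Lemma 2.1(2) applicable in the direction $(2)\Rightarrow(1)$, and (ii) noticing that $p$ commutes with $(p-q)^2$, which is what lets Lemma 2.2 extract a Drazin-invertible corner in the direction $(1)\Rightarrow(2)$. Everything else is a direct invocation of the Section 2 lemmas and of Proposition 3.1/Corollary 3.2; no induction or delicate estimate is required. By the $p\leftrightarrow q$ symmetry of the hypotheses one could equally multiply $(p-q)^2$ by $q$ instead and land on $q-qpq\in R^D$.
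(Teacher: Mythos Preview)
Your proposal is correct. The equivalence $(2)\Leftrightarrow(3)$ and the direction $(1)\Rightarrow(2)$ are essentially the paper's argument: the paper also uses the commutation $p(p-q)^2=(p-q)^2p=p-pqp$, though it passes through Lemma~2.5 to conclude $1-pqp\in R^D$ directly, whereas you use Lemma~2.2 to get $p-pqp\in R^D$ and then invoke Proposition~3.1; these are minor variants of the same idea.

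The genuine difference is in the converse. The paper proves $(3)\Rightarrow(1)$ using the ``corner'' decomposition
\[
(p-q)^2 \;=\; (1-pqp)\,p \;+\; \bigl(1-(1-p)(1-q)(1-p)\bigr)(1-p)
\]
and Lemma~2.5, after first observing that both factors are Drazin invertible via Proposition~3.1/Corollary~3.2 and Lemma~2.4. Your $(2)\Rightarrow(1)$ instead uses the orthogonal splitting $(p-q)^2=p(1-q)+q(1-p)$ with vanishing cross products and Lemma~2.1(2). Your route is a bit more elementary: it bypasses Lemma~2.5 entirely and needs only the Drazin additivity for orthogonal summands. The paper's route, on the other hand, makes the $p$-corner structure of $(p-q)^2$ explicit, which is the same mechanism used later in the paper. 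Both are short and clean.
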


\begin{proof} $(1)\Rightarrow(2)$ It is easy to check that $p(p-q)^2=(p-q)^2p=p-pqp$. Hence,
\begin{center}
$1-pqp=(p-q)^2p+1-p$.
\end{center}
Let $a=(p-q)^2$ and $b=1$. Then $ap=pa$, $bp=pb$. Since $p-q\in R^D$, we obtain that $a=(p-q)^2\in R^D$. In view of Lemma 2.5, $1-pqp=ap+b(1-p)\in R^D$. Therefore, $1-pq$ is Drazin invertible by Lemma 2.4.

$(2)\Leftrightarrow(3)$ is clear by Proposition 3.1 and Corollary 3.2.

$(3)\Rightarrow(1)$ Let $a=1-pqp$, $b=1-(1-p)(1-q)(1-p)$. Then we get $ap=pa$ and $bp=pb$. Since
 \begin{center}
 $1-(1-p)(1-p)(1-q)=p+q-pq \in R^D$,
\end{center}
we obtain that $b\in R^D$ by Lemma 2.4 and $a\in R^D$ by Proposition 3.1 and Corollary 3.2. Note that $(p-q)^2=ap+b(1-p)$. We get $(p-q)^2\in R^D$ by Lemma 2.5. Hence, $p-q\in R^D$.
\end{proof}

\begin{theorem}  The following statements are equivalent:\\
$(1)$ $pq \in R^D$, \\
$(2)$ $1-p-q \in R^D$, \\
$(3)$ $(1-p)(1-q)\in R^D$.
\end{theorem}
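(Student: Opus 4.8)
The plan is to establish $(1)\Leftrightarrow(2)$ directly, and then read off $(2)\Leftrightarrow(3)$ by applying that equivalence to the idempotents $1-p$ and $1-q$ (which replaces $pq$ by $(1-p)(1-q)$ and $1-p-q$ by $-(1-p-q)$, Drazin invertible exactly when $1-p-q$ is). The easy half of $(1)\Leftrightarrow(2)$, namely $(2)\Rightarrow(1)$, together with $(2)\Rightarrow(3)$, comes from the elementary identities
\[
p(1-p-q)^2=(1-p-q)^2p=pqp,\qquad (1-p)(1-p-q)^2=(1-p-q)^2(1-p)=(1-p)(1-q)(1-p),
\]
which exhibit $p$ (and hence $1-p$) as commuting with $(1-p-q)^2$. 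If $1-p-q\in R^D$, then $(1-p-q)^2\in R^D$, so Lemma 2.2 (with the idempotent $p$, resp. $1-p$) gives $pqp\in R^D$ and $(1-p)(1-q)(1-p)\in R^D$, and Lemma 2.3 upgrades these to $pq\in R^D$ and $(1-p)(1-q)\in R^D$.

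For the substantial direction $(1)\Rightarrow(2)$ I would use the factorization
\[
(1-p-q)^2 = 1-p-q+pq+qp = (1-p+pq)(1-q+qp),
\]
whose factors are $1-p(1-q)$ and $1-q(1-p)$ and commute with each other, since $p(1-q)\cdot q(1-p)=q(1-p)\cdot p(1-q)=0$ forces both orders of the product to equal $1-p(1-q)-q(1-p)$. Now assume $pq\in R^D$. Since $p$ and $1-q$ are idempotents and $p-p(1-q)=pq\in R^D$, Proposition 3.1 applied to the pair $(p,\,1-q)$ gives $1-p(1-q)=1-p+pq\in R^D$; similarly $qp\in R^D$ by Lemma 2.3, and since $q-q(1-p)=qp\in R^D$, Proposition 3.1 applied to $(q,\,1-p)$ gives $1-q(1-p)=1-q+qp\in R^D$. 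Then Lemma 2.2 yields $(1-p-q)^2\in R^D$, and hence $1-p-q\in R^D$.

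The main obstacle is discovering the factorization $(1-p-q)^2=(1-p+pq)(1-q+qp)$ and observing that each of its two commuting factors has exactly the form $1-ef$ with $e,f$ idempotents, so that Proposition 3.1 applies term by term; once this is seen, the remainder is bookkeeping with Lemmas 2.2 and 2.3 and the basic fact that $a\in R^D\Leftrightarrow a^2\in R^D$. It is worth organizing the write-up so that $(1)\Leftrightarrow(2)$ carries all the content and the symmetry $p\mapsto 1-p,\ q\mapsto 1-q$ disposes of $(2)\Leftrightarrow(3)$.
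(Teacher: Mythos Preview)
Your argument is correct. The identities you use check out, the two factors $1-p(1-q)$ and $1-q(1-p)$ do commute (since $p(1-q)\cdot q(1-p)=q(1-p)\cdot p(1-q)=0$), and your appeals to Proposition~3.1 (with $1-q$ or $1-p$ in the second slot), Lemma~2.2, Lemma~2.3, and the fact $a\in R^D\Leftrightarrow a^2\in R^D$ are all legitimate.

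However, your route differs from the paper's. The paper dispatches Theorem~3.4 in two lines by a substitution: set $p_1=1-p$, $q_1=q$ and invoke Theorem~3.3 together with Proposition~3.1 for the pair $(p_1,q_1)$, which immediately gives $1-p-q=p_1-q_1\in R^D \Leftrightarrow pq=q_1-p_1q_1\in R^D \Leftrightarrow (1-p)(1-q)=p_1-p_1q_1\in R^D$. In effect, the hard work was already done in proving Theorem~3.3 (where $(p-q)^2$ was decomposed as $ap+b(1-p)$ via Lemma~2.5). Your proof instead avoids Theorem~3.3 entirely and replaces the Lemma~2.5 additive decomposition of $(1-p-q)^2$ by the multiplicative factorization $(1-p-q)^2=(1-p(1-q))(1-q(1-p))$ into commuting factors, each handled by Proposition~3.1. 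This is a genuinely different (and pleasant) decomposition; it makes Theorem~3.4 logically independent of Theorem~3.3, at the cost of a longer argument than the paper's one-line reduction.
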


\begin{proof}
$(1)\Leftrightarrow(2)$ Let $p_1=1-p$ and $q_1=q$. Then $p_1-q_1\in R^D$ if and only if $q_1-p_1q_1\in R^D$ by Proposition 3.1 and Theorem 3.3. Since $p_1-q_1=1-p-q$ and $q_1-p_1q_1=pq$, $(1)\Leftrightarrow(2)$ holds.

$(1)\Leftrightarrow(3)$ Set $p_1=1-p$, $q_1=q$. Then $p_1-p_1q_1\in R^D$ if and only if $q_1-p_1q_1\in R^D$ by Proposition 3.1. Since $p_1-p_1q_1=(1-p)(1-q)$ and $q_1-p_1q_1=pq$, the result follows.
\end{proof}

\begin{theorem}  The following statements are equivalent:\\
$(1)$ $pq-qp\in R^D$,\\
$(2)$ $pq\in R^D$ and $p-q\in R^D$.
\end{theorem}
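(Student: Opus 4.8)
The plan is to reduce the Drazin invertibility of the commutator $pq-qp$ first to that of $pq-(pq)^2$ and then, via the already established Theorem 3.3, to the pair of conditions $pq\in R^D$ and $p-q\in R^D$. The engine is Lemma 2.6 applied with respect to the idempotent $p$.

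First I would record the elementary identity
\[
pq-qp=pq(1-p)-(1-p)qp ,
\]
obtained at once from $pq(1-p)=pq-pqp$ and $(1-p)qp=qp-pqp$. Writing $b=pq(1-p)$ and $c=(1-p)qp$, this presents $pq-qp$ as $b-c$, and $b,c$ are exactly the elements appearing in Lemma 2.6 when the element ``$a$'' there is taken to be $q$. Hence Lemma 2.6 gives
\[
pq-qp\in R^D\iff bc=pq(1-p)qp\in R^D .
\]

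Next I would apply Lemma 2.3. With $\alpha=pq(1-p)q$ and $\beta=p$ one has $\alpha\beta=pq(1-p)qp=bc$, while $\beta\alpha=pq(1-p)q=pq-(pq)^2$; applying Lemma 2.3 in both directions yields $bc\in R^D$ if and only if $pq-(pq)^2\in R^D$. It then remains to see that
\[
pq-(pq)^2\in R^D\iff pq\in R^D\ \text{and}\ 1-pq\in R^D .
\]
For ``$\Rightarrow$'', Lemma 2.7 applied to $pq$ gives $pq\in R^D$, and since $(1-pq)-(1-pq)^2=pq-(pq)^2$, Lemma 2.7 applied to $1-pq$ gives $1-pq\in R^D$. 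For ``$\Leftarrow$'', $pq$ commutes with $1-pq$, so Lemma 2.2 gives $pq(1-pq)=pq-(pq)^2\in R^D$. Finally, by Theorem 3.3 the condition $1-pq\in R^D$ is equivalent to $p-q\in R^D$, and chaining all the equivalences proves the theorem.

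I expect the only nonroutine point to be the very first step: recognizing that $pq-qp$ has vanishing $(p,p)$- and $(1-p,1-p)$-corners, so that it is governed by Lemma 2.6, and then choosing the factorization $\alpha=pq(1-p)q$, $\beta=p$ that converts $pq(1-p)qp$ into $pq-(pq)^2$. Everything after that is bookkeeping with polynomial identities in $p$ and $q$; the one thing to be careful about is invoking Lemma 2.3 twice (once with $(\alpha,\beta)$, once with $(\beta,\alpha)$) so as to obtain a genuine equivalence rather than a one-way implication.
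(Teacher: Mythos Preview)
Your proof is correct and follows the same architecture as the paper: write $pq-qp=b-c$ with $b=pq(1-p)$, $c=(1-p)qp$, invoke Lemma~2.6 to reduce to $bc\in R^D$, then use Lemma~2.7 and Theorem~3.3. The only real difference is in how you analyse $bc$. The paper observes directly that $bc=pqp-(pqp)^2$ and, for the converse, that $bc=pqp\,(p-q)^2=(p-q)^2\,pqp$; it then treats the two implications separately (in particular, to get $p-q\in R^D$ it repeats the whole argument with $p$ replaced by $1-p$). You instead insert one application of Cline's formula (Lemma~2.3) to pass from $bc=pq(1-p)qp$ to $pq(1-p)q=pq-(pq)^2$, and then read off both $pq\in R^D$ and $1-pq\in R^D$ from Lemma~2.7 applied to the single element $pq-(pq)^2$. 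This gives a clean chain of equivalences and avoids the ``similarly, replace $p$ by $1-p$'' step; the paper's version, on the other hand, makes the commuting factorisation $bc=pqp\,(p-q)^2$ explicit, which is a pleasant identity in its own right.
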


\begin{proof} Suppose $b=pq(1-p)$ and $c=(1-p)qp$. It follows that $b-c=pq-qp$.

$(1)\Rightarrow(2)$ By hypothesis $b-c\in R^D$, we obtain $pqp-(pqp)^2=pq(1-p)qp=bc\in R^D$ by Lemma 2.6. It follows that $pqp\in R^D$ by Lemma 2.7. Hence, $pq\in R^D$. Thus, $pq-qp\in R^D$ implies $pq\in R^D$.

Similarly,
$$(1-p)q-q(1-p)=-(pq-qp)\in R^D$$
implies $q-pq=(1-p)q \in R^D$. Therefore, $p-q\in R^D$ by Proposition 3.1 and Theorem 3.3.

$(2)\Rightarrow(1)$ By Lemma 2.3, both $pqp$ and $(p-q)^2$ are Drazin invertible. Note that $bc=pq(1-p)qp=pqp(p-q)^2=(p-q)^2pqp$. It follows that $bc$ is Drazin invertible by Lemma 2.2. Hence, according to Lemma 2.6, we have $pq-qp=b-c\in R^D$.
\end{proof}

\begin{theorem}  The following statements are equivalent:\\
$(1)$ $pq+qp\in R^D$,\\
$(2)$ $pq\in R^D$ and $p+q\in R^D$.
\end{theorem}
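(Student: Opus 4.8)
The plan is to reduce the whole statement to a fact about the single element $t := p+q$. Squaring gives $(p+q)^2 = p + pq + qp + q$, so $pq+qp = t^2 - t = t(t-1) = (t-1)t$, and $t$ commutes with $t-1$. Also, by Theorem 3.4, $pq \in R^D$ if and only if $1-p-q = -(t-1) \in R^D$, i.e. if and only if $t-1 \in R^D$. Thus statement $(2)$ says exactly ``$t \in R^D$ and $t-1 \in R^D$'', while statement $(1)$ says ``$t(t-1) \in R^D$''. So it suffices to prove: for $t = p+q$, one has $t(t-1) \in R^D$ if and only if $t \in R^D$ and $t-1 \in R^D$; translating back via Theorem 3.4 then finishes the theorem.

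The implication $(2) \Rightarrow (1)$ becomes immediate: if $t, t-1 \in R^D$ and $t(t-1) = (t-1)t$, then $t(t-1) = t^2 - t = pq+qp \in R^D$ by Lemma 2.2.

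For $(1) \Rightarrow (2)$, first note that $t - t^2 = -(pq+qp) \in R^D$, so $t = p+q \in R^D$ by Lemma 2.7; this is half of $(2)$. The remaining, and main, difficulty is to deduce $t-1 \in R^D$ from $t \in R^D$ and $t(t-1) \in R^D$. This cannot come from a general cancellation rule, since $a \in R^D$ together with $ab = ba \in R^D$ does not force $b \in R^D$ (take $a = 0$); the point is that here $t$ and $t-1$ differ only by the identity. I would split along the core--nilpotent decomposition of $t$: put $f := tt^D$, so $f^2 = f$, $f$ commutes with $t$ (hence with $t-1$ and with $t^2-t$), the element $g := tf = t^2 t^D$ is a unit of the corner ring $fRf$ with inverse $t^D$, and $n := t(1-f)$ is nilpotent. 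On the $(1-f)$-corner, $(t-1)(1-f) = n - (1-f)$ is a unit of $(1-f)R(1-f)$ by a Neumann series, hence Drazin invertible in $R$. On the $f$-corner, since $f$ commutes with $t^2 - t \in R^D$, Lemma 2.1(1) gives that $(t^2-t)f$ is Drazin invertible (with Drazin inverse $(t^2-t)^D f$); and since $(t^2-t)f = (tf)^2 - tf = g(g-f)$, multiplying by the unit $g^{-1} = t^D$ shows $(t-1)f = g-f = t^D\,(t^2-t)f$, a product of two commuting Drazin invertible elements, hence Drazin invertible by Lemma 2.2. Finally $t-1 = (t-1)f + (t-1)(1-f)$ is a sum of two Drazin invertible elements whose pairwise products vanish (because $f$ commutes with $t-1$), so $t-1 \in R^D$ by Lemma 2.1(2), and therefore $pq \in R^D$ by Theorem 3.4.

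I expect the only genuinely delicate points to be the bookkeeping with the corner rings $fRf$ and $(1-f)R(1-f)$ --- verifying that $tf$ is a unit of $fRf$, and that an element of such a corner lying in $(fRf)^D$ automatically lies in $R^D$ with the same Drazin inverse --- while everything else is routine manipulation of the Drazin identities.
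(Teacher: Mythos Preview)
Your argument is correct, including the core--nilpotent step you flagged as delicate: if $a\in eRe$ has Drazin inverse $b$ in the corner $eRe$, the three Drazin equations $ab=ba$, $bab=b$, $a^{k+1}b=a^k$ hold verbatim in $R$, so $b=a^D$ in $R$ too; and $tf$ is a unit of $fRf$ with inverse $t^D$ because $t^D f = f t^D = t^D$ and $(tf)t^D = t^D(tf) = f$. So nothing is missing.

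However, your route to $t-1\in R^D$ is much longer than the paper's. You apply Lemma 2.7 in the form ``$a-a^2\in R^D \Rightarrow a\in R^D$'' with $a=t$ to get $t\in R^D$, and then need the whole corner--ring decomposition to pass to $t-1$. The paper instead observes the companion identity
\[
pq+qp \;=\; (p+q-1) + (p+q-1)^2 \;=\; (t-1) + (t-1)^2,
\]
so the \emph{other} half of Lemma 2.7 (``$a+a^2\in R^D \Rightarrow a\in R^D$'') with $a=t-1$ gives $t-1\in R^D$ in one line, and Theorem 3.4 finishes. In effect, the single algebraic rewrite $t^2-t=(t-1)^2+(t-1)$ replaces your entire core--nilpotent argument. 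Your approach does prove a bit more along the way --- namely, that for $t=p+q$ one can cancel $t$ from $t(t-1)\in R^D$ once $t\in R^D$ --- but for this theorem the symmetric use of Lemma 2.7 is the shorter and more natural path.
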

\begin{proof} $(1)\Rightarrow(2)$ Since $pq+qp=-(p+q)+(p+q)^2=(p+q-1)+(p+q-1)^2\in R^D$, $p+q\in R^D$ and $p+q-1\in R^D$ according to Lemma 2.7. Therefore, $pq\in R^D$ by Theorem 3.4.

$(2)\Rightarrow(1)$ Since $pq+qp=(p+q)(p+q-1)=(p+q-1)(p+q)$, $pq+qp\in R^D$ by Lemma 2.2 and Theorem 3.4.

\end{proof}
\begin{remark}
{\rm Let $p$, $q$ be two idempotents in a Banach algebra. Then, $p+q$ is Drazin invertible if and only if $p-q$ is Drazin invertible [10]. However, in general, this need not be true in a ring. For example, let $R=\mathbb{Z}$, $p=q=1$. Then $p-q=0$ is Drazin invertible, but $p+q=2$ is not Drazin invertible.}
\end{remark}

\centerline {\bf ACKNOWLEDGMENTS} This research is supported by the National Natural Science Foundation of China (10971024),
the Specialized Research Fund for the Doctoral Program of Higher Education (20120092110020), the Natural Science Foundation of Jiangsu Province (BK2010393) and the Foundation of Graduate
Innovation Program of Jiangsu Province(CXLX13-072).
\bigskip

\end{document}